\newtheorem{proposition}{Proposition}
\newtheorem{lemma}{Lemma}
\newtheorem{theorem}{Theorem}
\theoremstyle{definition}
\newtheorem{example}{Example}
\theoremstyle{remark}
\newtheorem {remark}{Remark}
\DeclareMathOperator{\spec}{Spec}
\DeclareMathOperator{\homo}{Hom}
\DeclareMathOperator{\Aut}{Aut}
\DeclareMathOperator{\pol}{Pol}
\DeclareMathOperator{\Pic}{Pic}
\DeclareMathOperator{\supp}{Supp}
\DeclareMathOperator{\relint}{rel.int}
\DeclareMathOperator{\rank}{rank}
\def\GG{{\mathbb G}}
\def\CC{{\mathbb C}}
\def\KK{{\mathbb K}}
\def\TT{{\mathbb T}}
\def\ZZ{{\mathbb Z}}
\def\NN{{\mathbb N}}
\def\QQ{{\mathbb Q}}
\def\PP{{\mathbb P}}
\def\AA{{\mathbb A}}
\def\OO{\mathcal{O}}
\def\DD{\mathcal{D}}
\begin{document}
\date{}
\title[On rigidity of trinomial hypersurfaces]{On rigidity of factorial trinomial hypersurfaces}
\author{Ivan Arzhantsev}
\thanks{The research was supported by the grant RSF-DFG 16-41-01013}
\address{National Research University Higher School of Economics, Faculty of Computer Science, Kochnovskiy Proezd 3, Moscow, 125319 Russia}
\email{arjantsev@hse.ru}

\subjclass[2010]{Primary 13A50, 14R20; \ Secondary 14J50, 14L30, 14M25}

\keywords{Affine hypersurface, trinomial, locally nilpotent derivation, torus action}

\maketitle

\begin{abstract}
An affine algebraic variety $X$ is rigid if the algebra of regular functions $\KK[X]$ admits no nonzero locally nilpotent derivation. We prove that a factorial trinomial hypersurface is rigid if and only if every exponent in the trinomial is at least $2$.
\end{abstract}

\section{Introduction}
Let $\KK$ be an algebraically closed field of characteristic zero, $\GG_a$ be the additive group of the field $\KK$ and $X$ be an algebraic variety over $\KK$. The study of regular actions
$\GG_a\times X\to X$ is an actively developing area in the theory of algebraic transformation groups with applications to representation theory, commutative and differential algebra, complex analysis and dynamical systems. Recent results in this field can be found in~\cite{AFKKZ, AHHL, AL, FZ, F, FMJ, KPZ1, KPZ2, L} and many other works.

Assume that $X$ is an affine variety. If $A$ is a $\KK$-algebra, a derivation $D\colon A\to A$ is a linear map satisfying the Leibniz rule $D(ab)=D(a)b+aD(b)$ for all $a,b\in A$. A~derivation is called locally nilpotent if for each $a\in A$ there exists $m\in\ZZ_{>0}$ such that $D^m(a)=0$. It is well known that locally nilpotent derivations of the algebra $\KK[X]$ are in one-to-one correspondence with regular actions $\GG_a\times X\to X$, cf. \cite[Section~1.5]{F}. This correspondence allows to use geometric intuition in the theory of locally nilpotent derivations and provides a useful algebraic tool to deal with regular $\GG_a$-actions.

An affine variety $X$ is called rigid if it admits no non-trivial $\GG_a$-action. Equivalently,
$X$~is rigid if the algebra of regular functions $\KK[X]$ admits no nonzero locally nilpotent derivation.

One of the first results on rigidity of affine varieties was obtained by Kaliman and Zaidenberg. They proved \cite[Lemma~4]{KZ0} that the Pham-Brieskorn surface
$$
S_{k,l,m}=\{x_1^k+x_2^l+x_3^m=0\}\subseteq\CC^3, \qquad k,l,m \ge 2,
$$
is rigid if and only if it is not a dihedral surface $S_{2,2,m}$. The question whether the affine Fermat cubic threefold $x_1^3+x_2^3+x_3^3+x_4^3=0$ is rigid was posed~\cite{FZ} and stood open for 10~years. A positive answer is obtained recently in~\cite{CPW} by purely geometric methods. Many examples of rigid affine varieties are given in \cite[Chapter~9]{F}, \cite{CM}, \cite{FMJ}.

The aim of this note is to obtain a characterization of rigidity for some higher dimensional  hypersurfaces similar to the Pham-Brieskorn surfaces.
Fix positive integers $n_0,n_1,n_2$ and let $n=n_0+n_1+n_2$. For each $i=0,1,2$, fix a tuple $l_i\in\ZZ_{>0}^{n_i}$ and define a monomial
$$
T_i^{l_i}:=T_{i1}^{l_{i1}}\ldots T_{in_i}^{l_{in_i}}\in\KK[T_{ij}; \ i=0,1,2, \, j=1,\ldots,n_i].
$$
By a trinomial we mean a polynomial of the form $f=T_0^{l_0}+T_1^{l_1}+T_2^{l_2}$. A trinomial hypersurface $X$ is the zero set $\{f=0\}$ in the affine space $\KK^n$. One can check that $X$ is an irreducible normal affine variety of dimension $n-1$.

We say that an affine variety $X$ is factorial if the algebra $\KK[X]$ is a unique factorization domain. Returning to trinomials, for each $i=0,1,2$, let $d_i:=\text{gcd}(l_{i1},\ldots,l_{in_i})$.
Further we assume that $n_il_{i1}>1$ for $i=0,1,2$ or, equivalently, $f$ does not contain a linear term; otherwise the hypersurface $X$ is isomorphic to the affine space $\KK^{n-1}$. Under this assumption, the trinomial hypersurface $X$ is factorial if and only if any two of $d_0,d_1,d_2$ are coprime \cite[Theorem~1.1~(ii)]{HH}.

\begin{theorem} \label{thmain}
A factorial trinomial hypersurface $X$ is rigid if and only if every exponent in the trinomial
$T_0^{l_0}+T_1^{l_1}+T_2^{l_2}$ is at least 2.
\end{theorem}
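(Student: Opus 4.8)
The plan is to prove the two implications separately. For the direction ``some exponent equals $1$ $\Rightarrow$ $X$ is not rigid'' I would argue as follows. Permuting the three monomials and the variables within each block, assume $l_{01}=1$; then $n_0l_{01}>1$ forces $n_0\ge2$, so $g:=T_{02}^{l_{02}}\cdots T_{0n_0}^{l_{0n_0}}$ has positive degree and $f=T_{01}g+T_1^{l_1}+T_2^{l_2}$. Consider the derivation $D$ of $\KK[T_{ij}]$ determined by $D(T_{01})=l_{11}\,T_1^{l_1}/T_{11}$, $D(T_{11})=-g$ and $D(T_{ij})=0$ for all other $i,j$ (note $T_1^{l_1}/T_{11}$ is a polynomial). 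A direct check gives $D(f)=0$, so $D$ descends to $\KK[X]$; since $D(T_{11})\in\ker D$ and
$$
D^{k}(T_{01})=(-1)^{k-1}l_{11}(l_{11}-1)\cdots(l_{11}-k+1)\,g^{\,k-1}\,T_1^{l_1}/T_{11}^{\,k}
$$
vanishes for $k=l_{11}+1$, the derivation $D$ is locally nilpotent on $\KK[T_{ij}]$, hence on $\KK[X]$; and $D\ne0$ on $\KK[X]$ because $f$ has positive degree in $T_{11}$ and so does not divide $g$. Thus $X$ is not rigid. This part is a routine computation.

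For the converse, assume $l_{ij}\ge2$ for all $i,j$ and suppose, for contradiction, that $D\ne0$ is a locally nilpotent derivation of $\KK[X]$. Here $\KK[X]$ is graded by $K:=\ZZ^{n}/\langle l_0-l_1,\,l_0-l_2\rangle$, a group of rank $n-2$, so $X$ carries an effective action of a torus $\TT$ of dimension $n-2=\dim X-1$, i.e.\ is an affine $\TT$-variety of complexity one; moreover $\KK[X]$ admits a $\ZZ_{\ge0}$-grading (a coarsening of the $K$-grading) with degree-zero part $\KK$. By the standard homogenization argument (pass to the nonzero homogeneous component of extremal degree for a generic one-parameter subgroup of $\TT$) I may assume $D$ is $\TT$-homogeneous. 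Two facts are then available. First, since $d_0,d_1,d_2$ are pairwise coprime (factoriality), each $f\bmod T_{ij}$ is an irreducible binomial (its exponents are coprime), so every $T_{ij}$ is a prime element of the UFD $\KK[X]$; hence if $D(T_{ij})\ne0$ then $T_{ij}\nmid D(T_{ij})$, and therefore $\mathrm{ord}_{T_{ij}}\bigl(D(T_i^{l_i})\bigr)=l_{ij}-1$. Second, because $l_{ij}\ge2$ the polynomial $T_i^{l_i}/T_{ij}$ is itself divisible by $T_{ij}$, so $\prod_kT_{ik}$ divides $D(T_i^{l_i})=\sum_jl_{ij}(T_i^{l_i}/T_{ij})D(T_{ij})$ for each $i$.

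Now consider the rational quotient $\pi\colon X\dashrightarrow\PP^1$, $x\mapsto[T_0^{l_0}(x):T_1^{l_1}(x):T_2^{l_2}(x)]$ (well defined since $\sum_iT_i^{l_i}=0$ on $X$; its three special fibres are the loci $T_i^{l_i}=0$), and split according to whether the induced $\GG_a$-action preserves the fibres of $\pi$. In the \emph{vertical} case ($\pi$ is $\GG_a$-invariant): over the open set $U\subseteq\PP^1$ where all three coordinates are nonzero one has $\pi^{-1}(U)=X\cap(\KK^*)^n$, and every fibre of $\pi$ over $U$ is a coset of a subtorus of $(\KK^*)^n$; since a torus admits no nonzero locally nilpotent derivation, $\GG_a$ acts trivially on a dense subset of $X$, hence on $X$, contradicting $D\ne0$. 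The \emph{horizontal} case is the crux and the main obstacle. I would attack it via the classification of homogeneous locally nilpotent derivations on affine $\TT$-varieties of complexity one (after Liendo): the combinatorial data of the trinomial hypersurface over the three marked points of $\PP^1$ is read off from the exponent vectors $l_0,l_1,l_2$, and the existence of a horizontal $D$ forces this data into a restrictive shape which one must then show is incompatible with ``$l_{ij}\ge2$'' and ``$d_0,d_1,d_2$ pairwise coprime''. The two facts above are meant to drive this: writing $D(T_i^{l_i})=\bigl(\prod_kT_{ik}\bigr)g_i$, substituting into $\sum_iD(T_i^{l_i})=0$, matching orders of vanishing along the prime divisors $\{T_{ij}=0\}$, and using $\TT$-homogeneity to pin down the $K$-degrees of the $g_i$, the aim is to force $D(T_i^{l_i})=0$ for every $i$; since $\ker D$ is factorially closed and $T_i^{l_i}=\prod_jT_{ij}^{l_{ij}}$, this yields $D(T_{ij})=0$ for all $i,j$, i.e.\ $D=0$. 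Carrying out this final step---translating and then defeating the combinatorial constraints in the horizontal case---is where I expect the real work to lie; the vertical case and the ``only if'' direction are comparatively routine.
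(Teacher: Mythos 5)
Your ``only if'' direction is essentially identical to the paper's Lemma~\ref{l=1}: after permuting you take $l_{01}=1$ and write down the same derivation (with $\delta(T_{01})=l_{11}T_1^{l_1}/T_{11}$, $\delta(T_{11})=-T_{02}^{l_{02}}\cdots T_{0n_0}^{l_{0n_0}}$), and your nilpotency and nonvanishing checks are fine. Your reduction of the hard direction to a $\TT$-homogeneous derivation and the vertical/horizontal dichotomy is also the paper's strategy. In the vertical case you argue geometrically (generic fibres of $x\mapsto[T_0^{l_0}:T_1^{l_1}:T_2^{l_2}]$ are cosets of a subtorus, and a torus is rigid); the paper instead argues algebraically in Lemma~\ref{ver} (from $\delta(T_i^{l_i}/T_j^{l_j})=0$ one gets $T_i^{l_i}\mid\delta(T_i^{l_i})$, hence $\delta(T_i^{l_i})=0$ and then $\delta(T_{ij})=0$). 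Your version is acceptable modulo the standard fact that generic fibres of the rational quotient are invariant under a vertical $\GG_a$-action, but this part is not where the difficulty lies.

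The genuine gap is the horizontal case, which carries the entire content of the theorem and which you only announce rather than prove. Your two preparatory facts (that $\mathrm{ord}_{T_{ij}}(D(T_i^{l_i}))=l_{ij}-1$ when $D(T_{ij})\neq 0$, and that $\prod_k T_{ik}$ divides $D(T_i^{l_i})$ when all $l_{ij}\ge 2$) are correct but do not by themselves yield a contradiction, and the stated aim of ``forcing $D(T_i^{l_i})=0$ for every $i$'' comes with no argument; note moreover that $D(T_i^{l_i})=0$ for all $i$ would just say $D$ is vertical, so this route amounts to excluding horizontal derivations directly, which is exactly what remains unproved. The missing step is what the paper does in Sections~3--4: factoriality makes $L\colon\ZZ^n\to\ZZ^2$ surjective, so the Altmann--Hausen combinatorial data of $X$ is the proper polyhedral divisor $\DD=\Delta_1\cdot\{0\}+\Delta_2\cdot\{1\}+\Delta_0\cdot\{\infty\}$ on $\PP^1$ (Proposition~\ref{ppdiv}), and every vertex of each $\Delta_i$ has the form $S\bigl(\tfrac{1}{l_{ij}}e_k\bigr)$ (Remark~\ref{vert}); since $L\oplus S$ is a lattice isomorphism and $L\bigl(\tfrac{1}{l_{ij}}e_k\bigr)$ is integral, such a vertex lies in $N$ if and only if $\tfrac{1}{l_{ij}}e_k\in\ZZ^n$, which fails whenever $l_{ij}\ge 2$. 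Liendo's description of horizontal homogeneous locally nilpotent derivations on a complexity-one $\TT$-variety requires choosing one vertex in each coefficient of $\DD$ with all but at most two of the chosen vertices in $N$; with three marked points and no integral vertices this is impossible, which is the actual contradiction. Unless you carry out this computation of the combinatorial data and invoke (or reprove) the integrality condition in the classification—or supply some genuine substitute—your proof of the ``if'' direction is incomplete.
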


Note that a complete characterization of rigid trinomial (not necessary factorial) hypersurfaces
in $\KK^4$ follows from~\cite[Section~5.4]{CM} and \cite[Theorem~9.1]{FMJ}.

Our motivation to study trinomials comes from toric geometry. Consider an effective action
$T\times X\to X$ of an algebraic torus $T$ on a variety $X$. Recall that the complexity of this action is $\dim X-\dim T$. Actions of complexity zero are actions with an open $T$-orbit. A normal variety with such an action is called toric.

If $X$ is a toric (not necessary affine) variety with the acting torus $T$, then the actions $\GG_a\times X\to X$ normalized by $T$ can be described combinatorially in terms of the so-called Demazure roots; see~\cite{De}, \cite[Section~3.4]{Oda} for the original approach and
\cite{L, AL, AHHL} for generalizations. It is easy to deduce from this description that every affine toric variety different from a torus admits a non-trivial $\GG_a$-action. Moreover, by~\cite[Theorem~2.1]{AKZ} every non-degenerate affine toric variety of dimension at least 2 is flexible in the sense of~\cite{AFKKZ}. In particular, it admits many $\GG_a$-actions.

The study of toric varieties is related to binomials, see e.g.~\cite[Chapter~4]{S}. At the same time, Cox rings establish a close relation between torus actions of complexity one and trinomials, see~\cite{HS, HHS, HH, AHHL, HW}. In particular, any trinomial hypersurface admits a torus action of complexity one. This observation plays a crucial role in the proof of Theorem~\ref{thmain}.

More precisely, a torus action of complexity one on $X$ induces an effective grading on $\KK[X]$
by a lattice of rank $n-2$. If $\KK[X]$ admits a nonzero locally nilpotent derivation, then it admits a homogeneous one. Our idea is to represent $X$ by a proper polyhedral divisor on a curve following Altmann and Hausen~\cite{AH} and to use a description of homogeneous locally nilpotent derivations due to Liendo~\cite{L}. This allows to prove the ``if" part of Theorem~\ref{thmain}, while the ``only if" part is elementary.

In the last section we reformulate our result in the case of a homogeneous factorial trinomial hypersurface in geometric terms (Theorem~\ref{t2}). Namely, it is shown by Kishimoto, Prokhorov and Zaidenberg~\cite{KPZ2} that rigidity of the affine cone over a projective variety $Q$ may be interpreted as absence of certain cylindrical open subsets on $Q$. Thus the projectivization $Q$ of a homogeneous factorial trinomial hypersurface $X$ contains no cylinder if and only if every exponent in the trinomial is at least 2.

Let us finish the Introduction with some concluding remarks. If a trinomial has the form
$$
T_0^{l_0}+T_1^{l_1}+T_{21}T_{22} \quad \text{or} \quad T_0^{l_0}+T_{11}^2+T_{21}^2,
$$
then the trinomial hypersurface $X$ is a so-called suspension over the affine space $\KK^{n-2}$, and by \cite{KZ} (see also~\cite[Theorem~3.2]{AKZ}) the variety $X$ is flexible and thus it admits many $\GG_a$-actions. It is an interesting problem to find all flexible trinomial hypersurfaces.

The fact that an affine variety $X$ is rigid allows to clarify the structure of the automorphism group $\Aut(X)$. In turn, the study of the group $\Aut(X)$ provides a description of automorphisms of a (complete) variety $Y$ having the algebra $\KK[X]$ as the Cox ring $R(Y)$. We plan to make it precise in a forthcoming publication.

\smallskip

The author is grateful to Charles Weibel, Mikhail Zaidenberg, and the referee for useful comments and suggestions.

\section{Preliminaries} \label{s1}

We begin with some elementary observations. The following lemma uses the idea of \cite[Section~4]{AHHL}.

\begin{lemma} \label{l=1}
If $l_{01}=1$, then the trinomial hypersurface $X$ is not rigid.
\end{lemma}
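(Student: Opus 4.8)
The plan is to write down an explicit nonzero locally nilpotent derivation of $\KK[X]$ of Jacobian type, in the spirit of \cite[Section~4]{AHHL}. Since $l_{01}=1$, the standing hypothesis $n_0l_{01}>1$ forces $n_0\ge 2$, so we may write
$$
f=T_{01}M+g,\qquad M:=T_{02}^{l_{02}}\cdots T_{0n_0}^{l_{0n_0}},\qquad g:=T_1^{l_1}+T_2^{l_2},
$$
where $M$ is a nonconstant monomial in $T_{02},\ldots,T_{0n_0}$ and neither $M$ nor $g$ involves $T_{01}$. Since $n_1\ge 1$ the variable $T_{11}$ is present, and I define a derivation $D$ of $\KK[T_{ij}]$ by
$$
D(T_{01})=\frac{\partial f}{\partial T_{11}}=l_{11}T_{11}^{l_{11}-1}T_{12}^{l_{12}}\cdots T_{1n_1}^{l_{1n_1}},\qquad D(T_{11})=-\frac{\partial f}{\partial T_{01}}=-M,
$$
and $D(v)=0$ for every other variable $v$. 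By construction $D(f)=\frac{\partial f}{\partial T_{11}}\frac{\partial f}{\partial T_{01}}-\frac{\partial f}{\partial T_{01}}\frac{\partial f}{\partial T_{11}}=0$, so $D$ maps the ideal $(f)$ into itself and descends to a derivation of $\KK[X]=\KK[T_{ij}]/(f)$, which I again denote by $D$.

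The next step is to check that $D$ is locally nilpotent. Since nilpotency of $D$ on a finite set of algebra generators implies local nilpotency on the whole algebra, it suffices to check it on the variables. All variables other than $T_{01}$ and $T_{11}$ are $D$-constant; in particular $M$ and the factor $l_{11}T_{12}^{l_{12}}\cdots T_{1n_1}^{l_{1n_1}}$ of $D(T_{01})$ are $D$-constant. Hence $D^2(T_{11})=D(-M)=0$, while each further application of $D$ to $D(T_{01})$ only lowers the exponent of $T_{11}$, so that $D^{\,l_{11}+1}(T_{01})=0$. Therefore $D$ is locally nilpotent on $\KK[T_{ij}]$, and a fortiori on the quotient $\KK[X]$.

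It remains to see that the induced $D$ is nonzero, i.e. that $D(T_{11})=-M$ is not contained in the ideal $(f)$; this is the only step that is not purely formal, and it is where the trinomial shape of $f$ (and the absence of a linear term) really enters. Set $R:=\KK[T_{ij}\colon(i,j)\ne(0,1)]$ and decompose elements of $\KK[T_{ij}]$ according to their degree in $T_{01}$, so $f=g+MT_{01}$ with $g,M\in R\setminus\{0\}$. If $M=hf$ for some $h=\sum_{e\ge 0}h_eT_{01}^e$ with $h_e\in R$, then comparing the terms of $T_{01}$-degree $0$ gives $h_0g=M$ in $R$, so $g$ divides the monomial $M$ in the polynomial ring $R$. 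But $g=T_1^{l_1}+T_2^{l_2}$ contains the variable $T_{11}$ with nonzero coefficient (it occurs in the summand $T_1^{l_1}$ and cannot cancel, the two summands being in disjoint sets of variables), while $T_{11}$ does not appear in $M$; hence $g\nmid M$, a contradiction. Thus $M\notin(f)$, so $D$ is a nonzero locally nilpotent derivation of $\KK[X]$, which by the definition recalled in the Introduction means that $X$ is not rigid.
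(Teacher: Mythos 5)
Your derivation is exactly the one used in the paper: the Jacobian-type derivation $\delta(T_{01})=\partial f/\partial T_{11}$, $\delta(T_{11})=-\partial f/\partial T_{01}$, killing all other variables, which annihilates $f$ and hence descends to $\KK[X]$. The proposal is correct and follows the same approach, merely spelling out the local nilpotency and the nonvanishing of the induced derivation (via $M\notin(f)$), which the paper leaves as routine checks.
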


\begin{proof}
Let us define a derivation $\delta$ of the algebra $\KK[T_{ij}; \ i=0,1,2, \, j=1,\ldots,n_i]$ by
$$
\delta(T_{01})=l_{11}T_{11}^{l_{11}-1}T_{12}^{l_{12}}\ldots T_{1n_1}^{l_{1n_1}}, \quad
\delta(T_{11})=-T_{02}^{l_{02}}\ldots T_{0n_0}^{l_{0n_0}},
$$
and $\delta(T_{ij})=0$ for all other $i,j$. This derivation is locally nilpotent and
$\delta(T_0^{l_0}+T_1^{l_1}+T_2^{l_2})=0$, thus $\delta$ descents to the algebra
$$
\KK[X]=\KK[T_{ij}; \ i=0,1,2, \, j=1,\ldots,n_i]/(T_0^{l_0}+T_1^{l_1}+T_2^{l_2}).
$$
\end{proof}

Our next goal is to introduce a torus action of complexity one on a trinomial hypersurface.
Consider an integral $2\times n$ matrix
$$
L=
\begin{pmatrix}
-l_0 & l_1 & 0 \\
-l_0 & 0 & l_2
\end{pmatrix}.
$$

This matrix defines a homomorphism of lattices $L\colon \ZZ^n\to\ZZ^2$ with kernel $N$ isomorphic to $\ZZ^{n-2}$. Let us interpret $N$ as the lattice of one parameter subgroups of an algebraic torus $T$. The lattice of characters $M$ of the torus $T$ can be identified with the dual lattice
$\homo(N,\ZZ)$. We have two mutually dual exact sequences
$$
\begin{CD}
0 @>>> N @>>> \ZZ^n @>L>> \ZZ^2 @>>> 0
\end{CD}
$$
and
$$
\begin{CD}
0 @<<< M @<<< \ZZ^n @<L^t<< \ZZ^2 @<<< 0.
\end{CD}
$$

\smallskip

Letting $\deg(T_{ij})$ be the images in $M$ of the standard basis vectors $e_{ij}$ of the lattice $\ZZ^n$, we obtain an $M$-grading on the algebra $\KK[X]$. This grading gives rise to an effective action $T\times X\to X$ of complexity one on the trinomial hypersurface~$X$.

\begin{example} \label{Ex1}
Consider a trinomial hypersurface $T_{01}^2T_{02}^3+T_{11}^2+T_{21}^3=0$. We have
$$
L=
\begin{pmatrix}
-2 & -3 & 2 & 0 \\
-2 & -3 & 0 & 3
\end{pmatrix}
$$
and the kernel $N$ is generated by $(3,0,3,2)$ and $(0,2,3,2)$. So the action of a 2-torus on $X$ is given by the diagonal matrix
$$
\text{diag}(t_1^3, t_2^2, t_1^3t_2^3, t_1^2t_2^2).
$$
The $M$-grading on $\KK[X]$ is given by
$$
\deg(T_{01})=(3,0), \quad \deg(T_{02})=(0,2),\quad \deg(T_{11})=(3,3),\quad \deg(T_{21})=(2,2).
$$
\end{example}

We are going to study homogeneous locally nilpotent derivations on $\KK[X]$ with respect to this grading. Every derivation $\delta\colon\KK[X]\to\KK[X]$ admits a unique extension to a derivation of the quotient field $\KK(X)$. Denote by $\KK(X)^T$ the subfield of rational $T$-invariants.
Recall that a homogeneous derivation $\delta\colon\KK[X]\to\KK[X]$ is of vertical type if $\delta(\KK(X)^T)=0$ holds and of horizontal type otherwise.

In the next lemma we follow the proof of \cite[Theorem~4.3]{AHHL}.

\begin{lemma} \label{ver}
Let $X$ be a trinomial hypersurface. Then $\KK[X]$ admits no nonzero homogeneous locally nilpotent derivation of vertical type.
\end{lemma}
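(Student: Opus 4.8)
The plan is to suppose, for contradiction, that $\delta$ is a nonzero homogeneous locally nilpotent derivation of $\KK[X]$ of vertical type, so that $\delta$ vanishes on the field $\KK(X)^T$ of rational $T$-invariants. Since the $T$-action has complexity one, $\KK(X)^T$ is the field of rational functions of a curve $C$, and the inclusion $\KK(X)^T\hookrightarrow\KK(X)$ expresses $\KK(X)$ as a function field over $C$. The idea is to use the Altmann--Hausen presentation of $X$ by a proper polyhedral divisor $\mathcal D$ on $C$: the algebra $\KK[X]$ is graded by the weight monoid in $M$, and its homogeneous components are identified with spaces of sections $H^0(C,\OO(\mathcal D(m)))$ of certain $\QQ$-divisors $\mathcal D(m)$ on $C$. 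Because $\delta$ is of vertical type, it is $\KK(X)^T$-linear, so it acts on each graded piece by multiplication-like operators compatible with this section description; concretely, following \cite[Theorem~4.3]{AHHL} and the classification of vertical derivations in \cite{L}, a homogeneous vertical locally nilpotent derivation corresponds to a point of the curve and a shift in one direction of the weight cone, and its existence forces the polyhedral divisor (equivalently, the weight cone $\omega$) to have a very special shape — essentially $\omega$ must be ``large enough'' in the relevant direction.

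The first concrete step is to read off the weight cone $\omega\subseteq M_\QQ$ and the polyhedral divisor $\mathcal D$ of the trinomial hypersurface $X$ from the matrix $L$; this is the standard Cox-ring / complexity-one dictionary (cf. \cite{HH}, \cite{AHHL}), and here $C=\PP^1$ with three special points corresponding to the three monomials $T_0^{l_0},T_1^{l_1},T_2^{l_2}$, the polyhedral coefficients being the three simplices determined by the exponent vectors $l_0,l_1,l_2$. The second step is to invoke Liendo's description \cite{L}: a nonzero homogeneous locally nilpotent derivation of vertical type exists on $\KK[X]$ if and only if there is a ray $\rho$ of $\omega$ and a lattice point producing an ``admissible'' pair — in particular the cone $\omega$ would have to contain a translate of a coordinate-type ray in a way incompatible with the three-fold symmetry imposed by the trinomial. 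The third step is the arithmetic heart: I would show that the three polyhedral coefficients attached to the three points of $\PP^1$ are ``in general position'' enough — their supports have empty common intersection after the proper-polyhedral normalization — so that no such admissible pair can exist; this is exactly where the fact that $X$ comes from a \emph{trinomial} (three terms, three special fibers on a rational curve) is used, and it parallels verbatim the computation in \cite[Theorem~4.3]{AHHL}.

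The main obstacle I expect is the third step: making precise, in terms of the polyhedral divisor $\mathcal D$ on $\PP^1$, why a vertical locally nilpotent derivation would force a collinearity or containment among the three polyhedral coefficients that simply cannot hold. In the notation of Altmann--Hausen and Liendo, a vertical homogeneous LND is governed by the choice of a point $z\in C$ together with the requirement that, for the corresponding one-dimensional face, the polyhedral coefficient $\mathcal D_z$ be a translate of that face while $\mathcal D_{z'}$ for $z'\ne z$ contribute lattice points in a fixed polytope; with three distinct special points this is overdetermined, because each of the three monomials $T_i^{l_i}$ contributes a genuinely nontrivial simplex and their interplay leaves no room for the required degeneracy. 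Concretely I would track the three ``fiber'' conditions $\deg(T_i^{l_i})$, observe that a vertical LND would have to kill the rational invariant built from ratios of these three homogeneous elements, and derive that one of the $l_i$ must be trivial — contradicting the standing assumption $n_il_{i1}>1$. Once that collinearity/degeneracy is ruled out, the contradiction is immediate and the lemma follows.
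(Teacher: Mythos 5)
Your plan does not reach a proof. The decisive third step --- showing that the combinatorial data $(\PP^1,\DD)$ admits no ``admissible pair'' --- is exactly the content of the lemma in Liendo's language, and you only assert it (``I would show that the three polyhedral coefficients \dots are in general position enough''), without any computation with the tail cone $\sigma=\QQ_{\ge 0}^n\cap N_\QQ$ or the divisors $\DD(e)$. Moreover, the classification you invoke is garbled: for a projective base curve, homogeneous LNDs of \emph{vertical} (fiber) type are governed by Demazure roots $e$ of the tail cone $\sigma$ together with nonvanishing of $H^0(\PP^1,\OO(\DD(e)))$; the condition you describe --- that $\DD_z$ be a translate of a face at one point while the other coefficients contribute lattice points --- is of the kind that appears in the classification of \emph{horizontal} derivations (choices of vertices $v_z$, all but a prescribed number lying in $N$), which is precisely the part the paper handles separately in the proof of the main theorem. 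Finally, the contradiction you aim for (``one of the $l_i$ must be trivial, contradicting $n_il_{i1}>1$'') cannot be the right mechanism: the lemma is true for \emph{every} trinomial hypersurface, including those with a linear term (there $X\cong\KK^{n-1}$ is far from rigid, but the extra derivations are horizontal, e.g.\ the one in Lemma~\ref{l=1}), so existence of a vertical derivation does not legitimately force a linear monomial. A further, smaller issue: your route uses the $(\PP^1,\DD)$ presentation, which the paper sets up only under the factoriality hypothesis, while the lemma is stated for arbitrary trinomial hypersurfaces.

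Ironically, your closing remark contains the whole proof, and it is the paper's proof: if $\delta$ is vertical, it kills the invariants $T_i^{l_i}/T_j^{l_j}$, whence $\delta(T_i^{l_i})\,T_j^{l_j}=T_i^{l_i}\,\delta(T_j^{l_j})$ and so $T_i^{l_i}$ divides $\delta(T_i^{l_i})$ in $\KK[X]$. For a locally nilpotent $\delta$ this forces $\delta(T_i^{l_i})=0$ (\cite[Corollary~1.20]{F}), and since kernels of locally nilpotent derivations are factorially closed (\cite[Principle~1(a)]{F}), every variable $T_{ij}$ lies in $\ker\delta$, so $\delta=0$. No Altmann--Hausen or Liendo machinery is needed for the vertical case; if you want to keep your combinatorial approach, you must actually carry out the Demazure-root computation for $\sigma$ and $\DD$, which your proposal leaves entirely open.
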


\begin{proof}
Let $\delta\colon\KK[X]\to\KK[X]$ be such a derivation. Since for any $i,j=0,1,2$ we have
$\frac{T_i^{l_i}}{T_j^{l_j}}\in\KK(X)^T$, we conclude that
$$
\delta\left(\frac{T_i^{l_i}}{T_j^{l_j}}\right)=0 \quad \text{and} \quad
\delta(T_i^{l_i})T_j^{l_j}=T_i^{l_i}\delta(T_j^{l_j}).
$$
Thus $T_i^{l_i}$ divides $\delta(T_i^{l_i})$ and by \cite[Corollary~1.20]{F} we have $\delta(T_i^{l_i})=0$. Then $\delta(T_{i1}^{l_{i1}}\ldots T_{in_i}^{l_{in_i}})=0$ and \cite[Principle~1(a)]{F} implies $\delta(T_{i1})=\ldots=\delta(T_{in_i})=0$. This proves that $\delta=0$.
\end{proof}

To deal with homogeneous locally nilpotent derivations of horizontal type, we need some preparations provided in the next section.

\section{Combinatorial description of factorial trinomial hypersurfaces}

The aim of this section is to represent a factorial trinomial hypersurface by a proper polyhedral divisor on a projective line $\PP^1$ following \cite{AH}. We begin with generalities
on proper polyhedral divisors, cf. \cite[Section~1.1]{AL}.

Let $N$ be a lattice  and $M=\homo(N,\ZZ)$ be its dual lattice. We let $M_{\QQ}=M\otimes\QQ$, $N_{\QQ}=N\otimes\QQ$, and we consider the natural duality pairing $M_{\QQ}\times N_{\QQ}\rightarrow \QQ$, $(m,p)\mapsto \langle m,p\rangle$.

Let $T=\spec\KK[M]$ be the algebraic torus associated to $M$ and $X=\spec\,A$ be an affine $T$-variety. The comorphism $A\rightarrow A\otimes \KK[M]$ induces an $M$-grading on
$A$ and, conversely, every $M$-grading on $A$ arises in this way.

Let $\sigma$ be a pointed polyhedral cone in $N_{\QQ}$. We define $\pol_{\sigma}(N_{\QQ})$ to be the set of all $\sigma$-polyhedra i.e., the set of all polyhedra in $N_{\QQ}$ that can be decomposed as the Minkowski sum of a bounded polyhedron and the cone $\sigma$.

Recall that $\sigma^\vee$ stands for the cone in $M_\QQ$ dual to $\sigma$. To a $\sigma$-polyhedron $\Delta\in\pol_{\sigma}(N_{\QQ})$ we associate its support function $h_{\Delta}\colon \sigma^\vee\rightarrow \QQ$ defined by
$$
h_{\Delta}(m)=\min\langle m,\Delta\rangle=\min_{p\in \Delta}\langle
m,p\rangle\,.
$$
If $\{v_i\}$ is the set of all vertices of $\Delta$, then the support function is given by
$$
h_{\Delta}(m)=\min_i\{\langle m,v_i\rangle\}\quad\mbox{for all}\quad m\in\sigma^\vee\,.
$$

A normal variety $Y$ is called semiprojective, if it is projective over an affine variety. A~$\sigma$-polyhedral divisor on $Y$ is a formal sum $\DD=\sum_{Z}\Delta_Z\cdot Z$, where $Z$ runs over all prime divisors on $Y$, $\Delta_Z\in\pol_{\sigma}(N_{\QQ})$, and $\Delta_Z=\sigma$ for all but finitely many $Z$. For $m\in\sigma^{\vee}$ we can evaluate $\DD$ in $m$ by letting $\DD(m)$
be the $\QQ$-divisor
$$
\DD(m)=\sum_{Z\subseteq Y} h_Z(m)\cdot Z\,,
$$
where $h_Z$ is the support function of $\Delta_Z$. A $\sigma$-polyhedral divisor $\DD$ is called proper if the following hold:
\begin{enumerate}
\item $\DD(m)$ is semiample and $\QQ$-Cartier for all $m\in\sigma^\vee$, and
\item $\DD(m)$ is big for all $m\in\relint(\sigma^\vee)$.
\end{enumerate}

Here $\relint(\sigma^\vee)$ denotes the relative interior of the cone $\sigma^\vee$. Furthermore, a $\QQ$-Cartier divisor $D$ on $Y$ is called semiample if there exists $r>0$ such that the linear system $|rD|$ is base point free, and big if there exists a divisor $D_0\in |rD|$, for some $r>0$, such that the complement $Y\setminus \supp D_0$ is affine.

Let $\chi^m$ denote the character of $T$ corresponding to the lattice vector $m$, and $\sigma^\vee_M$ denotes the semigroup $\sigma^\vee\cap M$. For a $\QQ$-divisor $D$ on $Y$, $\OO_Y(D)$ stands for the sheaf $\OO_Y(\lfloor D\rfloor)$.

\begin{theorem} \cite{AH} To any proper $\sigma$-polyhedral divisor $\DD$ on a semiprojective variety $Y$ one can associate a normal affine $T$-variety of dimension $\rank M+\dim Y$ given by $X[Y,\DD]=\spec A[Y,\DD]$, where
$$
A[Y,\DD]=\bigoplus_{m\in\sigma^{\vee}_M} A_m\chi^m,\quad
  \mbox{and}\quad A_m=H^0(Y,\OO_Y(\DD(m))\subseteq \KK(Y)\,.
$$

Conversely, any normal affine $T$-variety is isomorphic to $X[Y,\DD]$ for some semiprojective variety $Y$ and some proper $\sigma$-polyhedral divisor $\DD$ on $Y$.
\end{theorem}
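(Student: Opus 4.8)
The plan is to prove both directions. For the forward direction I must check that $A[Y,\DD]$ is a finitely generated normal $M$-graded integral domain whose spectrum has the asserted dimension; for the converse I must reconstruct $(Y,\DD)$ from an arbitrary normal affine $T$-variety. I would begin with the forward direction. Since each $A_m=H^0(Y,\OO_Y(\DD(m)))$ lies in the field $\KK(Y)$, the candidate $A[Y,\DD]$ is a graded $\KK$-subspace of the group algebra $\KK(Y)[M]$, and the one point to verify for it to be a subalgebra is multiplicativity $A_m\cdot A_{m'}\subseteq A_{m+m'}$. Here the polyhedral structure enters: each support function $h_Z$ is concave and positively homogeneous on $\sigma^\vee$, hence superadditive, so $\DD(m)+\DD(m')\le\DD(m+m')$ coefficientwise; combined with superadditivity of the floor this gives $H^0(\OO_Y(\DD(m)))\cdot H^0(\OO_Y(\DD(m')))\subseteq H^0(\OO_Y(\DD(m+m')))$. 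Normality then follows from normality of $Y$: writing $A[Y,\DD]$ as an intersection of the discrete valuation rings attached to the prime divisors $Z$ forces integral closedness.

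The analytically harder inputs are finite generation and the dimension count, and both rest on properness. Semiampleness of $\DD(m)$ lets me replace $Y$ by the image of a morphism to projective space on which a multiple of $\DD(m)$ becomes a pullback of an ample base-point-free divisor; a Veronese/Rees-algebra argument then yields finite generation of $\bigoplus_m A_m\chi^m$ over $A_0$, and since $Y$ is projective over an affine variety $A_0=H^0(Y,\OO_Y)$ is itself finitely generated. For the dimension, bigness of $\DD(m)$ for $m\in\relint(\sigma^\vee)$ forces the graded pieces to grow maximally, so $\td_\KK\frk(A[Y,\DD])=\dim Y+\rank M$; equivalently the generic $T$-orbit is a full torus and the rational quotient has dimension $\dim Y$.

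For the converse, start from a normal affine $T$-variety $X=\spec A$ with induced grading $A=\bigoplus_{m\in S}A_m\chi^m$, $S\subseteq M$. Finite generation of $A$ makes the weights span a pointed rational polyhedral cone, which I take to be $\sigma^\vee$, with $\sigma$ its dual. The base $Y$ should have function field $\KK(X)^T$, and the crucial construction is to produce an honest semiprojective model $Y$ on which every $A_m$ is realized as the global sections of a $\QQ$-Cartier divisor $D_m$. The multiplicativity of $A$ translates into coefficientwise concavity of $m\mapsto D_m$, and a concave, piecewise-linear, positively homogeneous function on $\sigma^\vee$ is exactly the support function of a uniquely determined $\sigma$-polyhedron $\Delta_Z$; setting $\DD=\sum_Z\Delta_Z\cdot Z$ recovers a polyhedral divisor with $\DD(m)=D_m$. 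Properness of $\DD$ then encodes precisely that $A$ is finitely generated and $X$ affine and normal.

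I expect the main obstacle to be the converse, specifically the construction of the base $Y$. Taking $Y$ to be a mere birational model of the rational quotient $X\dashrightarrow\spec\KK(X)^T$ does not suffice, since one needs a single normal semiprojective variety on which the coefficient functions $m\mapsto\operatorname{ord}_Z D_m$ are simultaneously piecewise linear and the $\DD(m)$ are all semiample and $\QQ$-Cartier. Controlling these finitely many linearity walls and verifying properness forces a passage to a common refinement of the GIT quotients of $X$ by $T$ — the normalized Chow quotient — and it is this geometric input that makes the argument genuinely delicate.
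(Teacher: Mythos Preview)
The paper does not prove this theorem at all: it is stated with the citation \cite{AH} and no proof environment follows. It is quoted as a black box from Altmann--Hausen, and the surrounding text only recalls the recipe from \cite[Section~11]{AH} for computing $(Y,\DD)$ in the toric-embedding situation needed later. So there is no ``paper's own proof'' to compare against.

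That said, your outline is a faithful sketch of the Altmann--Hausen argument. The forward direction is essentially correct: superadditivity of the support functions gives the ring structure, normality comes from expressing $A[Y,\DD]$ as an intersection of DVRs, and semiampleness/bigness are exactly what drive finite generation and the dimension count. For the converse you have correctly located the real content: one must find a single semiprojective model $Y$ on which all the $\DD(m)$ are simultaneously $\QQ$-Cartier and semiample, and you are right that this is obtained as (a resolution of) the normalized Chow quotient, equivalently the inverse limit of the GIT quotients. One small point: the weight cone $\sigma^\vee$ need not be pointed in general (equivalently $\sigma$ need not be full-dimensional), so your phrase ``the weights span a pointed rational polyhedral cone'' is slightly off; what is pointed is $\sigma$, reflecting effectiveness of the $T$-action.
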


We call $Y$ the base variety and the pair $(Y,\DD)$ the combinatorial data of $X$.

Let us recall a recipe from \cite[Section~11]{AH} how to determine a proper polyhedral divisor for a given normal affine $T$-variety $X$. Assume that $X$ is contained in $\KK^n$ as a closed subvariety and the action $T\times X\to X$ is given by a diagonal action of $T$ on $\KK^n$. Then $T$ is a subtorus of the torus $\TT$ of all invertible diagonal matrices. We assume that $X$ hits the open orbit $O$ of $\TT$ on $\KK^n$.

We begin with a description of a proper polyhedral divisor corresponding to $\KK^n$ as a $T$-variety. The inclusion $T\subseteq\TT$ corresponds to an inclusion $F\colon N\subseteq\NN$ of lattices of one parameter subgroups. We obtain a (non-canonical) split exact sequence
$$
\begin{CD}
0 @>>> N @>F>> \NN @>P>> N_W @>>> 0,
\end{CD}
$$
where $N_W:=\NN/N$, i.e., there is a homomorphism $S\colon \NN\to N$ such that $S\circ F=\text{id}$. Let $\Sigma_W$ be the coarsest fan in $(N_W)_{\QQ}$ refining all cones $P(\tau)$, where $\tau$ runs through all faces of the cone $\QQ_{\ge 0}^n$ in $\NN_\QQ$. Then the toric variety $W$ corresponding to $\Sigma_W$ is the base variety.

Given a one-dimensional cone $\rho\in\Sigma_W$, let $v_{\rho}\in\rho$ denote the first lattice vector, and define a polyhedron
$$
\Delta_{\rho}:=S(\QQ_{\ge 0}^n\cap P^{-1}(v_{\rho}))\subset N_\QQ.
$$
Then every $\Delta_{\rho}$ is a $\sigma$-polyhedron with $\sigma=\QQ_{\ge 0}^n\cap N_\QQ$.
Denote by
$R_W\subseteq \Sigma_W$ the set of one-dimensional cones and by $D_{\rho}\subseteq W$ the prime divisor corresponding to $\rho\in R_W$. Then the proper polyhedral divisor on $W$ representing $\KK^n$ as a $T$-variety is
$$
\DD_{\text{toric}}=\sum_{\rho\in R_W} \Delta_{\rho}\cdot D_{\rho}.
$$

Now we come to a presentation of the variety $X$. The base variety $Y$ is the normalization of the closure of the image of $X\cap\, O$ in $W$ under the projection of tori defined by $P\colon\NN\to N_W$. Further, the proper polyhedral divisor $\DD$ for the $T$-variety $X$ is obtained by pulling back $\DD_{\text{toric}}$ to $Y$.

Let us apply this construction to a factorial trinomial hypersurface $X$. The closed embedding $X\subseteq\KK^n$ is the desired one and the subtorus $T\subseteq\TT$ has codimension $2$.
As was mentioned in the Introduction, $X$ is factorial if and only if the integers $d_i:=\text{gcd}(l_{i1},\ldots,l_{in_i})$, $i=0,1,2$, are pairwise coprime. This condition means that the homomorphism $L\colon\ZZ^n\to\ZZ^2$ constructed in Section~\ref{s1} is surjective. Thus we may identify $\NN$ with $\ZZ^n$, $N_W$ with $\ZZ^2$ and the projection $P$ with the map $L$.

It follows that the fan $\Sigma_W$ is the standard fan of the projective plane $\PP^2$ i.e., it has as its maximal cones
$$
\text{cone}((1,0),(0,1)), \quad
\text{cone}((0,1),(-1,-1)), \quad
\text{cone}((-1,-1),(1,0)).
$$

Let us denote the coefficient of the proper polyhedral divisor $\DD_{\text{toric}}$ at the
divisors corresponding to the rays
$$
\rho_0=\QQ_{\ge 0} (-1,-1), \quad \rho_1=\QQ_{\ge 0} (1,0) \quad \text{and} \quad
\rho_2=\QQ_{\ge 0} (0,1)
$$
by $\Delta_0$, $\Delta_1$ and $\Delta_2$ respectively. They have the form
$S(\QQ_{\ge 0}^n\cap L^{-1}(v_i))\subset N_\QQ$, where
$$
v_0=(-1,-1), \quad v_1=(1,0), \quad v_2=(0,1).
$$

\begin{proposition} \label{ppdiv}
Let $X$ be a factorial trinomial hypersurface $T_0^{l_0}+T_1^{l_1}+T_2^{l_2}=0$ in $\KK^n$ with
torus action of complexity one as described in Section~\ref{s1}. Then the combinatorial data of $X$ are $(\PP^1,\DD)$, where
$$
\DD=\Delta_1\cdot\{0\}+\Delta_2\cdot\{1\}+\Delta_0\cdot\{\infty\}
$$
with $\Delta_i=S(\QQ_{\ge 0}^n\cap L^{-1}(v_i))$ and $v_0=(-1,-1), v_1=(1,0), v_2=(0,1)$.
\end{proposition}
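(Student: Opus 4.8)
The plan is to run the Altmann--Hausen recipe recalled above, whose toric part is already in place: since $X$ is factorial the homomorphism $L$ is surjective, so $\NN=\ZZ^n$, $N_W=\ZZ^2$, $P=L$, the ambient toric variety is $W=\PP^2$ with its standard fan, and
$$
\DD_{\text{toric}}=\Delta_0\cdot D_{\rho_0}+\Delta_1\cdot D_{\rho_1}+\Delta_2\cdot D_{\rho_2},
$$
where $D_{\rho_i}\subseteq\PP^2$ is the torus-invariant prime divisor attached to the ray $\rho_i$. What is left is to identify the base $Y$ as the normalization of the closure of the image of $X\cap O$ under the torus projection $L\colon O\to T_{N_W}$, and then to pull $\DD_{\text{toric}}$ back to $Y$.

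First I would write this torus projection in coordinates. With $O=(\KK^*)^n$ in the coordinates $T_{ij}$, the two rows of $L$ are precisely the exponent vectors of the Laurent monomials $T_1^{l_1}/T_0^{l_0}$ and $T_2^{l_2}/T_0^{l_0}$, so $L\colon O\to T_{N_W}=(\KK^*)^2$ sends a point to
$$
(s_1,s_2):=\left(\frac{T_1^{l_1}}{T_0^{l_0}},\ \frac{T_2^{l_2}}{T_0^{l_0}}\right).
$$
On $X\cap O$ the trinomial relation, divided by the invertible function $T_0^{l_0}$, becomes $1+s_1+s_2=0$; hence the image of $X\cap O$ lies in the affine curve $C:=\{1+s_1+s_2=0\}\subseteq T_{N_W}$.

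Next I would check that this image is dense in $C$, which is the one place the complexity-one hypothesis is genuinely used. The fibre of the surjection $O\to T_{N_W}$ over any point is a single coset of the kernel $T$, hence one $T$-orbit of dimension $n-2$; since $X$ is $T$-invariant and irreducible of dimension $n-1$, the fibre of $X\cap O\to T_{N_W}$ over a point of the image has dimension at most $n-2$, so the image has dimension $1$. Being the image of the irreducible variety $X\cap O$ under a morphism, it is irreducible, hence dense in the irreducible curve $C$. Identifying $T_{N_W}$ with the standard torus of $\PP^2$ with homogeneous coordinates $z_0,z_1,z_2$ via $s_1=z_1/z_0$, $s_2=z_2/z_0$, compatibly with the rays $v_1=(1,0)$, $v_2=(0,1)$, $v_0=(-1,-1)$ of the fan, the closure of $C$ in $\PP^2$ is the line $\overline C=\{z_0+z_1+z_2=0\}$, which is isomorphic to $\PP^1$ and hence already normal; thus $Y=\PP^1$.

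Finally I would pull $\DD_{\text{toric}}$ back to $Y=\overline C$. The line $\overline C$ is not a coordinate line, so it meets each coordinate line $D_{\rho_i}=\{z_i=0\}$ transversally in a single point, and the three intersection points
$$
D_{\rho_1}\cap\overline C=[1:0:-1],\qquad D_{\rho_2}\cap\overline C=[1:-1:0],\qquad D_{\rho_0}\cap\overline C=[0:1:-1]
$$
are pairwise distinct (in particular $\overline C$ avoids the torus-fixed points of $\PP^2$). Hence the pullback of $\DD_{\text{toric}}$ has tail cone $\sigma$ away from these points and coefficient $\Delta_i$ with multiplicity $1$ at the $i$-th of them; choosing the isomorphism $Y\cong\PP^1$ that carries $[1:0:-1]$, $[1:-1:0]$, $[0:1:-1]$ to $0$, $1$, $\infty$ respectively gives $\DD=\Delta_1\cdot\{0\}+\Delta_2\cdot\{1\}+\Delta_0\cdot\{\infty\}$ with $\Delta_i=S(\QQ_{\ge0}^n\cap L^{-1}(v_i))$. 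The main obstacle is the density argument of the third step; the monomial bookkeeping in the second step and the transversality count in the last step are routine once the conventions for the fan of $\PP^2$ and the identifications $\NN\cong\ZZ^n$, $N_W\cong\ZZ^2$, $P=L$ are fixed as in the text.
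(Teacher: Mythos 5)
Your proof is correct and follows essentially the same route as the paper: write the torus projection in coordinates as $(t_0,t_1,t_2)\mapsto(t_0^{-l_0}t_1^{l_1},\,t_0^{-l_0}t_2^{l_2})$, identify the closure of the image of $X\cap O$ with the line $w_0+w_1+w_2=0\cong\PP^1$ in $\PP^2$, and pull back $\DD_{\text{toric}}$. The only difference is that you spell out the density and transversality checks that the paper leaves implicit, which is a welcome addition rather than a deviation.
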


\begin{proof}
On the tori $\TT\subseteq\KK^n$ and $(\KK^{\times})^2\subseteq\PP^2$ the projection is given by
$$
(t_0,t_1,t_2)\mapsto (t_0^{-l_0}t_1^{l_1}, t_0^{-l_0}t_2^{l_2}), \quad \text{where}
\quad t_i=(t_{i1},\ldots,t_{in_i}) \quad \text{and} \quad
t_i^{l_i}=t_{i1}^{l_{i1}}\ldots t_{in_i}^{l_{in_i}}.
$$
Hence the closure of the image of $X\cap O$ in the homogeneous coordinates $\left[w_0:w_1:w_2\right]$
on $\PP^2$ is given by $w_0+w_1+w_2=0$. Thus the base variety $Y$ is $\PP^1$ and the corresponding proper polyhedral divisor is
$$
\DD=\Delta_1\cdot\{0\}+\Delta_2\cdot\{1\}+\Delta_0\cdot\{\infty\}.
$$
\end{proof}

\begin{remark} \label{vert}
Let $e_1,\ldots,e_n$ be the standard basis of the lattice $\ZZ^n$. By construction, we have
$L(e_k)=l_{ij}v_i$, where $i=0, j=k$ for $k\le n_0$, $i=1, j=k-n_0$ for $n_0<k\le n_0+n_1$ and
$i=2, j=k-n_0-n_1$ otherwise. It is easy to check that the vertices of the polyhedra $\Delta_i$
are precisely the points $S(\frac{1}{l_{ij}}e_k)$, where $k=1,\ldots,n$ and $i,j$ are defined by $k$ as above.
\end{remark}

\section{Proof of Theorem~\ref{thmain}}

The ``only if" part follows from Lemma~\ref{l=1}. To prove the ``if" part, we have to show that if $l_{ij}\ge 2$ for all $i,j$, then $\KK[X]$ admits no nonzero locally nilpotent derivation.

Assume that a finitely generated domain $A$ is graded by a lattice $M$. If $A$ admits a nonzero locally nilpotent derivation, then it admits a homogeneous one \cite[Lemma~1.10]{L}. So in order to prove that $A$ is rigid it suffices to check that $A$ admits no nonzero homogeneous locally nilpotent derivation.

By Lemma~\ref{ver}, there is no nonzero homogeneous locally nilpotent derivation of vertical type.
A description of homogeneous locally nilpotent derivations of horizontal type for an affine $T$-variety $X$ of complexity one in terms of the combinatorial data $(Y,\DD)$ is given in \cite{L}, see also \cite[Section~1.4]{AL}. To represent such a derivation, as a first step one needs to fix a vertex $v_z$ for every coefficient $\Delta_z$ of the polyhedral divisor $\DD$ in such a way that all but two of these vertices are in $N$, see \cite[Definition~1.8(iii)]{AL}.

By Remark~\ref{vert}, the polyhedra $\Delta_0$, $\Delta_1$ and $\Delta_2$ in the combinatorial data $(\PP^1,\DD)$ corresponding to $X$ have all vertices of the form
$$
S\left(\frac{1}{l_{ij}}(0,\ldots,1,\ldots,0)\right).
$$
Note that the map $L\oplus S\colon\ZZ^n\to\ZZ^2\oplus\ZZ^{n-2}$ is an isomorphism of the lattices.    Since the induced map $L\colon \QQ^n\to\QQ^2$ sends $\frac{1}{l_{ij}}(0,\ldots,1,\ldots,0)$ to a lattice point, we conclude that all vertices of $\Delta_0$, $\Delta_1$ and $\Delta_2$ are not in the lattice $N$. Hence $\KK[X]$ admits no nonzero homogeneous locally nilpotent derivations of horizontal type.

This completes the proof of Theorem~\ref{thmain}.

\section{Geometric interpretation} \label{gi}

Assume that a trinomial $f=T_0^{l_0}+T_1^{l_1}+T_2^{l_2}$ is homogeneous i.e.,
$$
l_{01}+\ldots+l_{0n_0}=l_{11}+\ldots+l_{1n_1}=l_{21}+\ldots+l_{2n_2}.
$$
We consider a projective hypersurface $Q$ given by $\{f=0\}$ in $\PP^{n-1}$. In this section we present an interpretation of rigidity in terms of geometry of the hypersurface $Q$ due to Kishimoto, Prokhorov, and Zaidenberg.

By a cylinder in $Q$ we mean an open subset $U\subseteq Q$ isomorphic to $Z\times\AA^1$ for some affine variety $Z$.

\begin{theorem} \label{t2}
Consider a homogeneous factorial affine trinomial hypersurface $X$ given by $T_0^{l_0}+T_1^{l_1}+T_2^{l_2}=0$. Then the projective hypersurface $Q$ contains no cylinder if and only if every exponent in the trinomial is at least 2.
\end{theorem}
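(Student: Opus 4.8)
Theorem~\ref{t2} should follow by combining Theorem~\ref{thmain} with the cylinder criterion of Kishimoto, Prokhorov and Zaidenberg \cite{KPZ2} and standard facts on divisor class groups of affine cones; the only genuinely new point is that factoriality lets one drop the word ``polar'' from that criterion. First I would record the cone structure: the homogeneity assumption on $f$ says exactly that $X=\{f=0\}\subseteq\KK^n$ is the affine cone over $Q=\{f=0\}\subseteq\PP^{n-1}$ with respect to the very ample divisor $H=\OO_Q(1)$. Since $n\ge 3$, we have $H^1(\PP^{n-1},\OO(m-d))=0$ for every $m$, where $d$ is the common degree of the three monomials, so the restriction maps $H^0(\PP^{n-1},\OO(m))\to H^0(Q,\OO_Q(m))$ are all surjective; as $X$ is a normal hypersurface this gives $\KK[X]=\bigoplus_{m\ge 0}H^0(Q,\OO_Q(mH))$, so $X$ is the affine cone over $Q$ polarized by $H$ in the sense of \cite{KPZ2}. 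Moreover $X\setminus\{0\}$ is a $\GG_m$-torsor over $Q$, whence $Q$ is a normal irreducible projective variety of dimension $n-2\ge 1$; and since $\{0\}$ has codimension $n-1\ge 2$ in the normal variety $X$, the torsor exact sequence gives $\mathrm{Cl}(Q)/\ZZ[H]\cong\mathrm{Cl}(X\setminus\{0\})=\mathrm{Cl}(X)$.

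Next I would pass from rigidity to cylinders. The grading of $\KK[X]$ by degree makes it a connected $\ZZ_{\ge 0}$-graded domain, so by \cite[Lemma~1.10]{L} it admits a nonzero locally nilpotent derivation if and only if it admits a nonzero homogeneous one, which in turn is equivalent to the existence of a nontrivial $\GG_a$-action on $X$ normalized by the $\GG_m$-action. By the criterion of \cite{KPZ2}, such a $\GG_a$-action exists if and only if $Q$ contains an $H$-polar cylinder, that is, a cylinder $U\cong Z\times\AA^1$ such that $Q\setminus U$ is the support of an effective divisor whose class in $\mathrm{Cl}(Q)$ is a positive multiple of $[H]$. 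Thus $X$ is non-rigid if and only if $Q$ carries an $H$-polar cylinder.

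Finally I would upgrade an arbitrary cylinder to an $H$-polar one using factoriality, which is the step I expect to require the most care. If $X$ is factorial then $\mathrm{Cl}(X)=0$, so by the first paragraph $\mathrm{Cl}(Q)=\ZZ[H]$, infinite cyclic since $H$ is ample. Let $U\cong Z\times\AA^1$ be any cylinder in $Q$. Then $U$ is affine and is a proper open subset of the positive-dimensional projective variety $Q$, so by the standard fact that the complement of an affine open subset of a normal projective variety has pure codimension one, $Q\setminus U$ is the support of a nonzero reduced effective Weil divisor $D$. Its class lies in $\mathrm{Cl}(Q)=\ZZ[H]$, hence $D\sim mH$ for some integer $m$, and $m\ge 1$ because $D$ is a nonzero effective divisor on a projective variety while $H$ is ample (for instance $h^0(Q,\OO_Q(mH))\ge 2>1=h^0(Q,\OO_Q)$ forces $m\ge 1$). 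So $U$ is automatically $H$-polar, and ``$Q$ contains a cylinder'' is equivalent to ``$Q$ contains an $H$-polar cylinder'', hence to non-rigidity of $X$. Combining this with Theorem~\ref{thmain}, $Q$ contains no cylinder if and only if $X$ is rigid, if and only if every exponent $l_{ij}$ is at least $2$, which is the assertion. Here the cylinder criterion of \cite{KPZ2} is used as a black box; the remaining ingredients --- projective normality of a hypersurface, homogenization of locally nilpotent derivations, the class group of a cone, and purity of the boundary of an affine open --- are routine.
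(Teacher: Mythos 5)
Your proposal is correct and follows essentially the same route as the paper: invoke the Kishimoto--Prokhorov--Zaidenberg criterion (the paper's Theorem~\ref{kpz}) as a black box, use factoriality of $X$ to conclude $\cl(Q)=\ZZ[H]$ so that every cylinder is automatically $H$-polar, and then apply Theorem~\ref{thmain}. The paper compresses your third paragraph into a single citation of \cite[Exercise~6.3(c)]{H}; your spelled-out versions of the class-group computation, the purity of the boundary of an affine open set, and the positivity of $m$ are all correct fillings-in of that step.
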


To obtain this result we need to recall some notions. Let $H$ be an ample divisor on a normal projective variety $Q$. An $H$-polar cylinder in $Q$ is an open subset
$$
U=Q\setminus\supp(D)
$$
defined by an effective $\QQ$-divisor $D$ on $X$ such that $[D]\in\QQ[H]$ in $\Pic_\QQ(Q)$ and
$U$ is isomorphic to $Z\times\AA^1$ for some affine variety $Z$ \cite{KPZ1}.

\smallskip

The following result follows from \cite[Corollary~3.2]{KPZ2}.

\begin{theorem} \label{kpz}
Let $Q$ be a normal closed subvariety in $\PP^{n-1}$, $H$ be a hyperplane section of $Q$ and $X$ be the affine cone over $Q$ in $\KK^n$. Then $X$ is rigid if and only if $Q$ contains no $H$-polar cylinder.
\end{theorem}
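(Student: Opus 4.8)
The plan is to reduce rigidity of the cone to the existence of a homogeneous additive action and then to match such actions with $H$-polar cylinders. The affine cone $X$ over $Q$ carries the natural $\GG_m$-action coming from the cone structure, which endows
$$
\KK[X]=\bigoplus_{m\ge 0}H^0(Q,\OO_Q(mH))
$$
with a $\ZZ_{\ge 0}$-grading in which $\GG_m$ acts with weight $m$ on the $m$-th graded piece. By the homogenization principle already invoked above \cite[Lemma~1.10]{L}, $\KK[X]$ admits a nonzero locally nilpotent derivation if and only if it admits a nonzero homogeneous one with respect to this grading. Thus it suffices to establish an equivalence between nonzero homogeneous locally nilpotent derivations on $X$ and $H$-polar cylinders on $Q$.

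For the direction from cylinders to derivations, suppose $U=Q\setminus\supp(D)\cong Z\times\AA^1$ is an $H$-polar cylinder. After scaling $D$ we may assume it is an integral divisor with $D\sim rH$ for some $r>0$. The translation action along the $\AA^1$-factor is a nontrivial $\GG_a$-action on $U$, which lifts to the $\GG_m$-bundle lying over $U$ inside $X\setminus\{0\}$. Because $[D]$ is proportional to $[H]$, the potential poles of the lifted vector field along the fiber over $\supp(D)$ are exactly compensated by positive powers of the grading, so the action extends regularly across the whole cone $X$. This produces a nonzero $\GG_m$-normalized $\GG_a$-action, equivalently a nonzero homogeneous locally nilpotent derivation on $\KK[X]$.

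For the converse, a nonzero homogeneous locally nilpotent derivation corresponds to a $\GG_a$-action on $X$ normalized by $\GG_m$. A general orbit of this action is an affine line, and passing to $Q=(X\setminus\{0\})/\GG_m$ the action descends to an $\AA^1$-fibration over an open subset $U\subseteq Q$, so $U\cong Z\times\AA^1$ is a cylinder. The complement $Q\setminus U$ is the image of the degeneracy locus of the action, and normalization by $\GG_m$ forces it to be the support of a divisor whose class lies in $\QQ[H]$, whence $U$ is $H$-polar. Combining the two directions, $X$ is rigid precisely when $Q$ contains no $H$-polar cylinder; this is the content of \cite[Corollary~3.2]{KPZ2}.

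The main obstacle is the precise control, in both extension arguments, of the interaction between the grading and the divisor class of the cylinder complement. Concretely, one must verify that the pole order of the lifted translation field along $\supp(D)$ matches the weight shift dictated by $D\sim rH$, and dually that homogeneity of the derivation pins the class of $Q\setminus U$ to a rational multiple of $[H]$ rather than to an arbitrary class in $\Pic_\QQ(Q)$. This polarity bookkeeping is exactly where the hypothesis $[D]\in\QQ[H]$ enters and constitutes the technical heart of \cite{KPZ2}.
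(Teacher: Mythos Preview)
The paper does not actually prove this theorem: the sentence preceding it is ``The following result follows from \cite[Corollary~3.2]{KPZ2}'', and no further argument is given. Your proposal therefore goes beyond the paper by sketching the content of that reference, and you end up invoking the same citation, so at the level of logical dependence the two agree.

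As a sketch of the Kishimoto--Prokhorov--Zaidenberg argument your outline is broadly on target, but two points deserve tightening. First, a $\GG_a$-action on $X$ normalized by $\GG_m$ does \emph{not} descend to a $\GG_a$-action on $Q$ (the normalizing character is typically nontrivial); what one obtains instead is that a homogeneous slice $f\in\KK[X]^{\GG_a}$ of some degree $r$ trivializes the cone over the principal open set $Q_f$, and the $\GG_a$-action on $X_f\cong Q_f\times\AA^1$ then exhibits $Q_f$ itself as a cylinder. Second, the assertion that ``normalization by $\GG_m$ forces $[Q\setminus U]\in\QQ[H]$'' is precisely the point that needs the argument: it comes from the fact that the complement is cut out by a \emph{homogeneous} invariant, hence is the divisor of a section of $\OO_Q(rH)$. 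You flag this yourself in your closing paragraph, so your proposal is honest about where the real work lies; just be aware that, as written, the middle paragraphs assert rather than prove these steps.
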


\begin{proof}[Proof of Theorem~\ref{t2}]
Since the variety $X$ is factorial, every divisor on $Q$ is linearly equivalent to a multiple of $H$, see \cite[Exercise~6.3(c)]{H}. Thus any cylinder in $Q$ is automatically $H$-polar and
Theorem~\ref{kpz} shows that rigidity of $X$ is equivalent to absence of cylinders in $Q$.
We conclude that Theorem~\ref{t2} follows from Theorem~\ref{thmain}.
\end{proof}

\begin{example}
The threefold
$$
w_{01}^2w_{02}^3+w_{11}^2w_{12}^3+w_{21}^5=0
$$
in $\PP^4$ contains no cylinder.
\end{example}


\end{document}